\documentclass{amsart}

\usepackage{amsmath}
\usepackage{amsthm}
\usepackage{amsopn}
\usepackage{amssymb}
\usepackage{mathabx}
\usepackage[all]{xy}

\parskip 0.7pc
\parindent 0pt

\allowdisplaybreaks[1]
\interfootnotelinepenalty=10000

\newcommand{\ul}[1]{\underline{#1}}
\newcommand{\mc}[1]{\mathcal{#1}}

\newcommand{\mr}[1]{\mathrm{#1}}
\newcommand{\mbf}[1]{\mathbf{#1}}

\newcommand{\abs}[1]{\left\lvert #1 \right\rvert}

\newcommand{\td}[1]{\widetilde{#1}}

\newcommand{\ZZ}{\mathbb{Z}}

\newcommand{\FF}{\mathbb{F}}

\newcommand{\HH}{\mbf{H}}

\newcommand{\Sp}{\mathrm{Sp}}
\newcommand{\Top}{\mathrm{Top}}

 \newtheorem{thm}[equation]{Theorem}
 
 \newtheorem{lem}[equation]{Lemma}
 \newtheorem{prop}[equation]{Proposition}

\theoremstyle{definition}

 \newtheorem{rmk}[equation]{Remark}
\newtheorem*{thm*}{Theorem}
\newtheorem*{cor*}{Corollary}
\newtheorem*{lem*}{Lemma}
\newtheorem*{prop*}{Proposition}
\newtheorem*{defn*}{Definition}
\newtheorem*{ex*}{Example}
\newtheorem*{exs*}{Examples}
\newtheorem*{rmk*}{Remark}
\newtheorem*{claim*}{Claim}

\numberwithin{equation}{section}
\numberwithin{figure}{section}

\DeclareMathOperator{\Map}{Map}

\DeclareMathOperator*{\hocolim}{hocolim}

\title{A $C_2$-equivariant analog of Mahowald's Thom spectrum theorem}
\author{Mark Behrens and Dylan Wilson}
\date{\today}

\setcounter{tocdepth}{1}

\begin{document}

\begin{abstract}
We prove that the $C_2$-equivariant Eilenberg-MacLane spectrum associated with the constant Mackey functor $\ul{\FF}_2$ is equivalent to a Thom spectrum over $\Omega^\rho S^{\rho + 1}$.
\end{abstract}

\maketitle

\section{Introduction}

Let $\mu$ be the M\"obius bundle over $S^1$, regarded as a virtual bundle of dimension $0$.  The mod $2$ Moore spectrum is the Thom spectrum
$$ M(2) \simeq (S^1)^\mu. $$
The classifying map for $\mu$ extends to a double loop map
$$ \td{\mu} : \Omega^2S^3 \rightarrow BO. $$
Mahowald proved the following theorem \cite{Mahowald}:

\begin{thm}[Mahowald]
There is an equivalence of spectra
$$ (\Omega^2S^3)^{\td{\mu}} \simeq H\FF_2. $$
\end{thm}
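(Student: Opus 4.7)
The plan is to exploit the universal property of Thom $E_2$-ring spectra, construct a canonical map $X \to H\FF_2$, and verify it is an equivalence via a mod $2$ homology calculation.

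By May's approximation theorem, $\Omega^2 S^3 \simeq \Omega^2 \Sigma^2 S^1$ is the free $E_2$-space on $S^1$. By the universal property of Thom $E_2$-ring spectra (Ando-Blumberg-Gepner-Hopkins-Rezk; Antol\'in-Camarena), $X := (\Omega^2 S^3)^{\td{\mu}}$ is the free $E_2$-ring spectrum equipped with a nullhomotopy of the class $\td{\mu}|_{S^1}$, which represents $-1 \in \pi_1 \mr{BGL}_1(\mb{S}) \cong (\pi_0 \mb{S})^\times$. Such a nullhomotopy is equivalent to a trivialization of $2$ in $\pi_0$, so since $H\FF_2$ is an $E_2$-ring with $2=0$ there is a canonical $E_2$-ring map
$$\phi : X \longrightarrow H\FF_2.$$

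To compute $H_*(X;\FF_2)$, I would apply the Thom isomorphism: since the M\"obius bundle is trivialized over $H\FF_2$, base change gives $H\FF_2 \otimes X \simeq H\FF_2 \otimes \Sigma^\infty_+ \Omega^2 S^3$ as $E_2$-$H\FF_2$-algebras. By Kudo-Araki, $H_*(\Omega^2 S^3;\FF_2)$ is a polynomial algebra whose generators are iterated Dyer-Lashof operations on the fundamental class $\iota \in H_1$, in degrees $2^i-1$ for $i \geq 1$; these match the degrees of the Milnor generators $\xi_i \in \mc{A}_* = H_*(H\FF_2;\FF_2)$.

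The crux is showing $\phi_* : H_*(X;\FF_2) \to \mc{A}_*$ is an isomorphism. Since $\phi$ is an $E_2$-ring map, $\phi_*$ is a map of polynomial $\FF_2$-algebras commuting with Dyer-Lashof operations. I would first verify $\phi_*(\iota) = \xi_1$ by factoring through the first James cell $M(2) \hookrightarrow X \to H\FF_2$ (on $H_1$ both classes detect the dual of the Bockstein $\mr{Sq}^1$). Then Steinberger's formulas for the Dyer-Lashof action on $\mc{A}_*$ identify the iterated operations on $\iota$ with the $\xi_i$ modulo decomposables. Combined with a Poincar\'e-series comparison, $\phi_*$ is surjective and hence an isomorphism.

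Finally, $X$ and $H\FF_2$ are both connective of finite type and $2$-torsion (for $X$, because $2=0$ in $\pi_0$), hence $2$-complete, so the mod $2$ homology isomorphism promotes $\phi$ to an equivalence of spectra. The main obstacle is the Dyer-Lashof compatibility step: once the polynomial generators on both sides are correctly matched using Steinberger's formulas, the conclusion is formal from the universal property of Thom $E_2$-rings.
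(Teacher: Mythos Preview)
The paper does not actually prove this statement: it is attributed to Mahowald and cited without proof, serving only as motivation for the equivariant Theorem~1.2. Your argument is a correct modern proof of the classical result, and its shape closely parallels the paper's proof of the equivariant analog: both use the free $E_n$-algebra description of the Thom spectrum to obtain a Thom isomorphism, then construct a map to the Eilenberg--MacLane spectrum, then check it is a homology isomorphism by hitting a bottom class via the cell $M(2)\hookrightarrow X$ and propagating with Dyer--Lashof operations, and finally conclude by a finite-type argument.

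Two minor differences are worth noting. First, you build $\phi$ via the Antol\'in-Camarena--Barthel universal property of Thom $E_2$-rings, whereas the paper (in the equivariant setting) simply takes the Thom class as the map; these amount to the same thing but your phrasing packages more structure up front. Second, your endgame invokes $2$-completeness (valid since $2=0$ in $\pi_0 X$ forces $\pi_*X$ to be an $\FF_2$-module, and bounded-below spectra with $\FF_2$-module homotopy are $2$-complete), while the paper instead argues that a surjection between finite-type graded $\FF_2$-vector spaces of the same Poincar\'e series is an isomorphism. Both conclusions are standard; the paper's version avoids appealing to completeness but requires knowing the target homology abstractly.
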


The bundle $\mu$ may also be regarded as a $C_2$-equivariant virtual bundle over $S^1$, by endowing both $S^1$ and the bundle with the trivial action.  Since $B_{C_2}O$ is an equivariant infinite loop space \cite{Atiyah}, the classifying map for $\mu$ extends to an
$\Omega^{\rho}$-map
$$ \td{\mu}: \Omega^{\rho}S^{\rho+1} \rightarrow B_{C_2}O. $$
Here, $\rho$ is the regular representation of $C_2$. 
The purpose of this paper is to prove the following.

\begin{thm}\label{thm:Mahowald}
There is an equivalence of $C_2$-spectra
$$ (\Omega^\rho S^{\rho+1})^{\td{\mu}} \simeq H\ul{\FF}_2. $$
\end{thm}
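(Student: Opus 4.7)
The strategy is to replicate the Hopkins--Mahowald argument in the $C_2$-equivariant setting. Set $R := (\Omega^{\rho} S^{\rho+1})^{\td{\mu}}$. Since $\td{\mu}$ is an $\Omega^\rho$-extension of the classifying map of $\mu$, $R$ is an $E_\rho$-algebra in $C_2$-spectra. Restriction of $\td{\mu}$ along the bottom cell $S^1 \hookrightarrow \Omega^\rho S^{\rho+1}$ produces the Moore spectrum $\mb{S}/2$ (with trivial $C_2$-action), so the unit $\mb{S} \to R$ factors through $\mb{S}/2$, giving a preferred nullhomotopy of $2 : \mb{S} \to R$. Invoking an equivariant refinement of the Thom spectrum universal property in the style of Ando--Blumberg--Gepner--Hopkins--Rezk or Lewis, $R$ should be the universal $E_\rho$-algebra in $C_2$-spectra equipped with such a nullhomotopy. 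Since $H\ul{\FF}_2$ is an $E_\infty$-$C_2$-algebra in which $2 = 0$, this universal property furnishes a canonical comparison map $f : R \to H\ul{\FF}_2$.

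The remaining task is to show $f$ is an equivalence. Since both sides are bounded-below $C_2$-spectra, it suffices to check that $f$ induces equivalences on underlying spectra and on geometric $C_2$-fixed points. On underlying spectra, $f$ becomes the classical Mahowald map $(\Omega^2 S^3)^{\td{\mu}} \to H\FF_2$, which is an equivalence by the theorem recalled above. For geometric fixed points, using that $\Phi^{C_2}$ of an equivariant Thom spectrum over a $C_2$-space with bundle $\xi$ is the non-equivariant Thom spectrum of $\xi^{C_2}$ over the fixed-point space, one identifies $\Phi^{C_2} R$ as a Thom spectrum over $(\Omega^\rho S^{\rho+1})^{C_2}$. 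An equivariant James-type analysis of this fixed-point space, together with the description of the restricted bundle $\td{\mu}^{C_2}$, should reduce the comparison to a further non-equivariant computation and yield the desired identification with the (known) value of $\Phi^{C_2} H\ul{\FF}_2$.

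The principal obstacle is this geometric fixed-point computation: concretely identifying $(\Omega^\rho S^{\rho+1})^{C_2}$, understanding the restricted classifying map $\td{\mu}^{C_2}$, and matching the resulting non-equivariant Thom spectrum with $\Phi^{C_2} H\ul{\FF}_2$. A secondary, more foundational difficulty is the equivariant Thom spectrum universal property in enough generality to apply to $\Omega^\rho$-maps. Should the universal-property route prove delicate, an alternative is to directly compute the Mackey-functor homotopy of $R$ via an equivariant Kudo--Araki--Cohen description of $H^{C_2}_*(\Omega^\rho S^{\rho+1}; \ul{\FF}_2)$ combined with an equivariant Thom isomorphism, and then compare term-by-term with the dual equivariant Steenrod algebra.
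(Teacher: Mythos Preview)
Your proposal outlines two strategies. The \emph{alternative} you mention at the end---compute $\HH_\star$ of the Thom spectrum via an equivariant Thom isomorphism and a Dyer--Lashof description of $\HH_\star(\Omega^\rho S^{\rho+1})$, then compare with the Hu--Kriz dual Steenrod algebra---is exactly the route the paper takes. The paper constructs the comparison map as the Thom class (equivalently, your universal-property map), establishes the Thom isomorphism via the identification $(\Omega^\rho S^{\rho+1})^{\td\mu} \simeq \mathrm{Free}^*_{E_\rho}((S^1)^\mu)$, computes $\HH_\star(\Omega^\rho S^{\rho+1})$ explicitly, and finishes by noting that the homologies are abstractly isomorphic of finite type and that the image contains $\tau_0$, whence surjectivity follows from a generation result for the equivariant dual Steenrod algebra. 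The one twist worth internalizing is that the ``check on underlying and on geometric fixed points'' principle you propose for the spectra is instead applied \emph{homologically}: the paper's key lemma says that a family $\{b_i\} \subset \HH_\star X$ is an $\HH_\star$-basis as soon as $\{\Phi^e(b_i)\}$ and $\{\Phi^{C_2}(b_i)\}$ are bases of $H_*(X^e)$ and $H_*(X^{\Phi C_2})$. This is what makes the computation of $\HH_\star(\Omega^\rho S^{\rho+1})$ tractable.

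Your \emph{primary} strategy---prove the equivalence by checking it on underlying spectra and on $\Phi^{C_2}$ directly---is a genuinely different line, and the obstacle you flag is real. The paper does identify $(\Omega^\rho S^{\rho+1})^{C_2} \simeq \Omega S^2 \times \Omega^2 S^3$ (via the fiber sequence obtained from mapping out of ${C_2}_+ \to S^0 \to S^\sigma$), which is the first step you would need. But to finish you must identify the restricted bundle $\td\mu^{C_2}$ under the projection $(B_{C_2}O)^{C_2} \to BO$ picking out the fixed sub-bundle, and then show that the resulting non-equivariant Thom spectrum over $\Omega S^2 \times \Omega^2 S^3$ is equivalent to $\Phi^{C_2}\HH \simeq \bigvee_{i \ge 0} \Sigma^i H$. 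Concretely this amounts to showing that on the $\Omega^2 S^3$ factor one recovers the classical Mahowald bundle (giving $H$) while on the $\Omega S^2$ factor the bundle is $H$-orientable (contributing $\Omega S^2_+$). This is plausible but you have not supplied it, and tracing the $\Omega^\rho$-extension through fixed points is delicate; the paper sidesteps the issue entirely by working in $\HH$-homology from the start.
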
 

(Here, $\ul{\FF}_2$ denotes the constant Mackey functor with value $\FF_2$.)

\subsection*{Acknowledgements}

Many tricks in this paper have been independently discovered by Doug Ravenel, and the first author's involvement in this project is an outgrowth of mathematical discussions with Agn\'es Beaudry, Prasit Bhattacharya, Dominic Culver, Doug Ravenel, and Zhouli Xu.  The authors also benefited from valuable input from Mike Hill, and the comments of the referee.  The first author was supported by NSF grant DMS-1611786.

\subsection*{Conventions}
Equivariant objects in this paper either live in $\Top^{C_2}$, the category of $C_2$-spaces, or $\Sp^{C_2}$, the category of genuine $C_2$-spectra.  In both of these categories, the equivalences are those equivariant maps which induce equivalences on both the $C_2$-fixed points spectrum and the underlying spectrum.
We let $\HH$ denote the Eilenberg-Maclane spectrum $H\ul{\FF}_2$, with underlying spectrum $H := H\FF_2$.  We use $\HH_\star$ and $\pi^{C_2}_\star$ to denote $RO(C_2)$-graded homology and homotopy \emph{groups} (i.e. \emph{not} the Mackey functors) of $C_2$-equivariant spaces and spectra, and $H_*$ and $\pi_*$ to denote the ordinary homology and homotopy groups of non-equivariant spaces and spectra. We let $\sigma$ denote the sign representation of $C_2$, and let $\rho = 1+\sigma$ denote the regular representation.  For a representation $V$, $S(V)$ denotes the unit sphere in $V$, and $S^V$ denotes its one point compactification, and $\abs{V}$ denotes its dimension.

\section{Equivariant preliminaries}

\subsection*{Euler class}

Let $a$ denote the Euler class in $\pi^{C_2}_{-\sigma}S$, given geometrically by the inclusion
$$ S^0 \hookrightarrow S^\sigma. $$
There is a cofiber sequence
\begin{equation}\label{eq:cofiber}
{C_2}_+ \rightarrow S^0 \hookrightarrow S^\sigma
\end{equation}
so the cofiber of $a$ is stably given by
\begin{equation}\label{eq:Ca}
Ca \simeq \Sigma^{1-\sigma} {C_2}_+. 
\end{equation}
The equivalence of underlying spectra
\begin{equation}\label{eq:ulSsig}
 (S^1)^e \simeq (S^\sigma)^e
\end{equation}
induces an equivalence of $C_2$-spectra
$$ {C_2}_+ \wedge S^1 \simeq {C_2}_+ \wedge S^\sigma. $$
Therefore, the equivalence (\ref{eq:Ca}) can actually be regarded as giving an equivalence
$$ Ca \simeq {C_2}_+. $$
It follows that $Ca$ is a commutative ring spectrum.
The adjoint of the equivalence (\ref{eq:ulSsig}) gives a $C_2$-equivariant map
$$ {C_2}_+ \wedge S^1 \rightarrow S^\sigma $$
which, by the self-duality of ${C_2}_+$, gives a map
$$ u : S^1 \rightarrow {C_2}_+\wedge S^\sigma \simeq Ca \wedge S^\sigma $$   
which serves as a Thom class for the representation $\sigma$.
For $X \in \Sp^{C_2}$, we have
\begin{align*}
\pi^{C_2}_{k}(X) & \cong \pi_k(X^{C_2}), \\
\pi^{C_2}_{V}(X \wedge Ca) & \cong \pi_{\abs{V}}(X^e).
\end{align*}
Said differently,
\begin{equation}\label{eq:pistarCa}
\pi^{C_2}_{\star} (X \wedge Ca) \cong \pi_* (X^e) [u^\pm].
\end{equation}

\subsection*{Tate square}

We will let
\begin{align*}
X^h & := F({EC_2}_+, X), \\
X^\Phi & := X \wedge \td{EC}_2
\end{align*}
denote the homotopy completion and geometric localization of $X$, respectively.  The fixed points of $X^h$ are the homotopy fixed points of $X$, and the fixed points of $X^\Phi$ are the geometric fixed points of $X$.
$X$ is recovered from these approximations by the pullback (``Tate square'') \cite{GreenleesMay}
$$
\xymatrix{
X \ar[r] \ar[d] & X^\Phi \ar[d] \\
X^h \ar[r] & X^t
}
$$
where the spectrum $X^t$ is the equivariant Tate spectrum
$$ X^t := (X^h)^{\Phi}. $$

Note that a generalization of the argument establishing (\ref{eq:Ca}) yields an equivalence
$$ \Sigma^{k\sigma-1}C(a^k) \simeq S(k\sigma)_+. $$
Taking a colimit, we see that we have
\begin{align*}
\hocolim_k \Sigma^{k\sigma-1}C(a^k) & \simeq {EC_2}_+, \\
\hocolim_k S^{k\sigma} & \simeq \td{EC}_2. 
\end{align*}
It follows that homotopy completion and geometric localization can be reinterpreted as $a$-completion and $a$-localization:
\begin{align*}
 X^h & \simeq X^\wedge_a, \\
 X^\Phi & \simeq X[a^{-1}].
\end{align*}
In this manner, the Tate square is equivalent to the ``$a$-arithmetic square''
$$
\xymatrix{
X \ar[r] \ar[d] & X[a^{-1}] \ar[d] \\
X^\wedge_a \ar[r] & X^\wedge_a[a^{-1}]
}
$$
Using (\ref{eq:pistarCa}), the $a$-Bockstein spectral sequence takes the form
$$ E_1^{*,*} = \pi_*(X^e)[u^{\pm}, a] \Rightarrow \pi^{C_2}_\star(X^h). $$
The $a$-Bockstein spectral sequence can be regarded as an $RO(C_2)$-graded version of the homotopy fixed point spectral sequence (see \cite[Lem.~4.8]{HillMeier}).

\subsection*{The mod $2$ Eilenberg-MacLane spectrum}

We have \cite{HuKriz}
$$ \pi^{C_2}_\star \HH = \FF_2[a,u] \oplus \frac{\FF_2[a,u]}{(a^\infty, u^\infty)}\{\theta\} $$
where
\begin{align*}
\abs{u} & = 1-\sigma, \\
\abs{\theta} & = 2\sigma - 2.
\end{align*}
The $a$-$u$ divisible factor in $\pi_\star\HH$ is best understood from the Tate square, using
\begin{align*}
\pi^{C_2}_\star \HH^h & \cong \FF_2[a, u^{\pm1}], \\
\pi^{C_2}_\star \HH^\Phi & \cong \FF_2[a^{\pm1}, u].
\end{align*}
Actually, the second isomorphism lifts to an equivalence
$$ \HH^{\Phi C_2} \simeq H[a^{-1}u] := \bigvee_{i \ge 0} \Sigma^{i} H $$
so we have
$$ \HH^{\Phi}_\star X \cong H_*(X^{\Phi C_2})[a^{\pm1},u] $$
and, restricting the grading to trivial representations, we get
\begin{equation}\label{eq:HPhi}
 \HH^\Phi_* X \cong H_*(X^{\Phi C_2})[a^{-1}u].
\end{equation}

By applying $\pi_{V}^{C_2}$ to the map
$$ \HH \wedge X \rightarrow \HH \wedge X \wedge Ca $$
we get a homomorphism
\begin{equation}\label{eq:Phie}
\Phi^e: \HH_V(X) \rightarrow H_{\abs{V}}(X^e).
\end{equation}
Taking geometric fixed points of a map
$$ S^V \rightarrow \HH \wedge X $$
gives a map
$$ S^{V^{C_2}} \rightarrow \HH^{\Phi C_2} \wedge X^{\Phi C_2} $$
Using (\ref{eq:HPhi}) and passing to the quotient by the ideal generated by $a^{-1}u$, we get a homomorphism
\begin{equation}\label{eq:PhiC2}
\Phi^{C_2}: \HH_V(X) \rightarrow H_{\abs{V^{C_2}}}(X^{\Phi C_2}).
\end{equation}

\subsection*{A useful lemma}

Our main computational lemma is the following.

\begin{lem}\label{lem:useful}
Suppose that $X \in \Sp^{C_2}$ and suppose that $\{b_i\}$ is a set of elements of $\HH_\star (X)$ such that
\begin{enumerate}
\item $\{ \Phi^e(b_i) \}$ is a basis of $H_*(X^e)$, and
\item $\{ \Phi^{C_2}(b_i) \}$ is a basis of $H_*(X^{\Phi C_2})$.
\end{enumerate}
Then $\HH_\star(X)$ is free over $\HH_\star$, and $\{b_i\}$ is a basis.
\end{lem}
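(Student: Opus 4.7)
The plan is to assemble the classes $b_i$ into a single map of $C_2$-spectra and show it is an equivalence. Writing $V_i \in RO(C_2)$ for the degree of $b_i$, set $F := \bigvee_i \Sigma^{V_i} \HH$; the classes $b_i$ then determine a map of $\HH$-module spectra $f : F \to \HH \wedge X$. Since $\pi^{C_2}_\star F = \bigoplus_i \HH_\star\{b_i\}$, it suffices to show $f$ is an equivalence in $\Sp^{C_2}$. By the Tate square it is enough to check that $f$ is an equivalence on underlying spectra and on geometric fixed points: the underlying spectrum determines both $F^h$ and $F^t \simeq (F^h)^\Phi$, so an equivalence on underlying and on geometric fixed points forces an equivalence on the whole Tate square, hence on $F$ itself.

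On underlying spectra, $f^e : \bigvee_i \Sigma^{\abs{V_i}} H \to H \wedge X^e$ sends the generator indexed by $b_i$ to $\Phi^e(b_i) \in H_{\abs{V_i}}(X^e)$, so hypothesis (1) gives the equivalence directly. For the geometric fixed points, the identification $\HH^{\Phi C_2} \simeq H[a^{-1}u]$ recalled in (\ref{eq:HPhi}) presents the induced map on $\pi_*$ as a map of graded $\FF_2[a^{-1}u]$-modules
$$ \bigoplus_i \FF_2[a^{-1}u]\{1_i\} \longrightarrow H_*(X^{\Phi C_2})[a^{-1}u], $$
and by the definition (\ref{eq:PhiC2}) of $\Phi^{C_2}$, the image of $1_i$ is $\Phi^{C_2}(b_i)$ plus terms divisible by $a^{-1}u$. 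Filtering both sides by powers of $a^{-1}u$, the filtration is finite in each internal degree because $a^{-1}u$ lives in strictly positive degree, and the associated graded map sends $1_i$ to $\Phi^{C_2}(b_i)$; by hypothesis (2) this associated graded map is an isomorphism, so $f^{\Phi C_2}$ is as well.

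The step I expect to require the most care is the geometric fixed point calculation, specifically verifying that the leading coefficient of $f^{\Phi C_2}(1_i)$ as a polynomial in $a^{-1}u$ really is $\Phi^{C_2}(b_i)$ (rather than some twisted variant), and that the filtration by powers of $a^{-1}u$ converges in each bidegree. Once those identifications are nailed down, the underlying check is immediate and the reduction through the Tate square is formal.
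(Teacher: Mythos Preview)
Your proof is correct and follows essentially the same strategy as the paper: assemble the $b_i$ into a map of $\HH$-modules and check it is an equivalence on underlying spectra and on geometric fixed points. The paper's argument is terser---it invokes the standard fact that $\Phi^e$ and $\Phi^{C_2}$ jointly detect equivalences rather than routing through the Tate square, and it elides the $a^{-1}u$-filtration step on geometric fixed points that you (correctly) spell out---but the substance is the same.
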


\begin{proof}
The set $\{b_i\}$ corresponds to a map
	\[
	\HH \wedge \bigvee S^{|b_i|} \to \HH \wedge X.
	\]
Assumption (1) implies this map is an equivalence upon applying
$\Phi^e$, while assumption (2) implies this map is an equivalence
upon applying $\Phi^{C_2}$. The result follows.
\end{proof}

\section{Homology of $\rho$-loop spaces}

We spell out some specific algebraic structure carried by the equivariant homology of a $\rho$-loop space.  A more detailed and general study of this algebraic structure can be found in \cite{Hill}.

\subsection*{Products}

Suppose $X = \Omega^\rho Y \in \Top^{C_2}$ is a $\rho$-loop space.  Then $X$ is in particular a $1$-loop space, and is therefore an equivariant $H$-space with product
$$ m: X \times X \rightarrow X. $$
However, the $\sigma$-loop space structure also endows $X$ with a twisted product related to the transfer.  Namely, let 
$$ S^\sigma \rightarrow S^\sigma/S^0 \approx {C_2}_+\wedge S^1 $$
be the pinch map.  This gives rise to a twisted product
$$ \td{m}: N^\times \Omega Y \rightarrow \Omega^\sigma Y $$
where
$$ N^\times Z := \Map(C_2, Z) = \underset{\substack{ \curvearrowbotleftright \\ C_2}}{Z \times Z} $$
is the norm with respect to Cartesian product (i.e. the coinduced space).
In particular, there is a map
\begin{equation}\label{eq:mtilde}
\td{m}: N^\times \Omega^2 Y \rightarrow X.
 \end{equation}
Upon applying fixed points to the map (\ref{eq:mtilde}), we get an additive transfer
\begin{equation}\label{eq:transfer}
t: X^e \rightarrow X^{C_2}.
\end{equation}
In homology, the $H$-space structure give rise to a product
$$ m: \HH_V X \otimes \HH_W X \rightarrow \HH_{V+W} X. $$
Using the equivariant commutative ring spectrum structure of $\HH$ \cite{Ullman}, 
the twisted product $\td{m}$ gives rise to a ``norm map''(see \cite[Thm.~7.2]{BlumbergHill})
$$ n: H_k X^e \rightarrow \HH_{k\rho} X. $$

\subsection*{Dyer-Lashof operations}

$X$ has even more structure:
$X$ is an $E_\rho$-algebra \cite{GuillouMay}.  Specifically, regard 
$S(\rho)$ as a $C_2 \times \Sigma_2$-space where $C_2$ acts on $\rho$ and $\Sigma_2$ acts antipodally.  
Then the $E_\rho$-structure gives a map
$$ S(\rho) \times_{\Sigma_2} X^{\times 2} \rightarrow X. $$
Note that $\HH$ is itself an $E_\rho$-ring spectrum, because it is actually an equivariant commutative ring spectrum, so $\HH \wedge X_+$ is an $E_\rho$-ring in $\HH$-modules.
Given $x \in \HH_V(X)$, represented by a map
$$ x: S^V \rightarrow \HH \wedge X_+, $$
there is an induced composite 
\begin{align*} 
\HH \wedge S(\rho)_+ \wedge_{\Sigma_2} S^{2V} \xrightarrow{1 \wedge 1 \wedge x \wedge x} & \HH \wedge S(\rho)_+ \wedge_{\Sigma_2} (\HH \wedge X_+)^{\wedge 2} \\
 \rightarrow & \HH \wedge \HH \wedge X_+ \\
 \rightarrow & \HH \wedge X_+
 \end{align*}
(where the unlabeled maps come from the $E_\rho$-ring and $\HH$-module structure of $\HH \wedge X_+$).
Applying $\pi_\star^{C_2}$, we get 
a total power operation
$$ \mc{P}(x): \td{\HH}_\star(S(\rho)_+ \wedge_{\Sigma_2} S^{2V}) \rightarrow \HH_\star X. $$
For the purposes of this paper we will be only concerned with the case of $V = k\rho - \sigma$ for $k \in \ZZ$.




\begin{prop}
We have
$$ \td{\HH}_\star \left( S(\rho)_+ \wedge_{\Sigma_2} S^{2(k\rho-\sigma)} \right) \cong \HH_\star \{ e_{2k\rho-\sigma-1}, e_{2k\rho - \sigma} \}. $$
\end{prop}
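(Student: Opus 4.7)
My plan is to apply Lemma~\ref{lem:useful}. Writing $X := S(\rho)_+ \wedge_{\Sigma_2} S^{2(k\rho-\sigma)}$, the underlying spectrum $X^e$ is the classical extended power $(S^1)_+ \wedge_{\Sigma_2} (S^{2k-1})^{\wedge 2}$. Using the decomposition $V \oplus V \cong V \oplus V\cdot\mathrm{sgn}$ as $\Sigma_2$-representations (under swap), this identifies as the Thom space of a rank-$(4k-2)$ bundle over $\RR P^1 = S^1/\Sigma_2$, which is equivalent to the stunted suspension $\Sigma^{2k-1}(\RR P^{2k}/\RR P^{2k-2})$. Its mod-$2$ homology is one-dimensional in each of dimensions $4k-2$ and $4k-1$, matching the underlying dimensions of the proposed generators $e_{2k\rho-\sigma-1}$ and $e_{2k\rho-\sigma}$.

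For the geometric fixed points, since $\Phi^{C_2}$ commutes with smash products and with $\Sigma_2$-orbits,
\[
X^{\Phi C_2} \simeq (S(\rho)^{C_2})_+ \wedge_{\Sigma_2} (S^{2(k\rho-\sigma)})^{\Phi C_2} \simeq (\Sigma_2)_+ \wedge_{\Sigma_2} S^{k+k\cdot\mathrm{sgn}} \simeq S^{2k},
\]
using $S(\rho)^{C_2} = \{(\pm 1,0)\} \cong \Sigma_2$ (free) and $(2V)^{C_2} = k \oplus k$ with swap decomposing as $k + k\cdot\mathrm{sgn}$. Thus $H_*(X^{\Phi C_2};\FF_2)$ is one-dimensional in dimension $2k$, matching the fixed-point dimension of $e_{2k\rho-\sigma}$.

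To construct the classes, I identify $X$ as the Thom $C_2$-spectrum of the associated bundle $S(\rho) \times_{\Sigma_2} 2V$ over $S(\rho)/\Sigma_2 \simeq S^\sigma$. The two $C_2$-fixed orbits give fibers of different $C_2$-representation types: at $[(\pm 1,0)]$ the $C_2 \times \Sigma_2$-stabilizer is $C_2\times\{e\}$ so the fiber is $2V = 2k\rho - 2\sigma$, but at $[(0,\pm 1)]$ the stabilizer is the diagonal $\Delta \subset C_2\times\Sigma_2$, so the $C_2$-action on the fiber is the twisted map $(v_1,v_2)\mapsto(gv_2, gv_1)$ on $V \oplus V$, which diagonalizes to the $C_2$-representation $(2k-1)\rho = 2k\rho-\sigma-1$. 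The Thom class at this fixed point gives $e_{2k\rho-\sigma-1}$. The class $e_{2k\rho-\sigma}$ comes from the $1$-cell contribution in the CW-filtration of $X$ over $S^\sigma$, using the stable equivalence $(C_2)_+ \wedge S^1 \simeq (C_2)_+ \wedge S^\sigma$ from (\ref{eq:ulSsig}) to shift the naive degree $2V+1$ up to $2V + \sigma = 2k\rho - \sigma$.

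The verification then proceeds: $\Phi^e$ sends the two classes to a basis of $H_*(X^e)$; $\Phi^{C_2}$ sends $e_{2k\rho-\sigma}$ to the generator of $H_{2k}(X^{\Phi C_2})$; and $e_{2k\rho-\sigma-1}$ has trivially zero $\Phi^{C_2}$-image since $H_{2k-1}(X^{\Phi C_2}) = 0$. The hardest part is reconciling the last point with the hypothesis of Lemma~\ref{lem:useful}, which literally requires $\{\Phi^{C_2}(b_i)\}$ to form a basis rather than merely containing one. This is the main obstacle and requires either a mild strengthening of the lemma that accommodates generators whose fixed-point degree lies outside the support of $H_*(X^{\Phi C_2})$, or a direct cofiber-sequence argument on the $0$-and $1$-skeleton filtration of $S^\sigma$: the naive $0$-skeleton yields three pieces (generators in degrees $2k\rho-2\sigma$, $(2k-1)\rho$, and $2k\rho-\sigma$), and one must show that the attaching map of the $1$-cell precisely cancels the surplus $S^{2k\rho-2\sigma}$ summand from the first fixed orbit, leaving only the two claimed generators.
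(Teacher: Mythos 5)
Your overall strategy (compute the underlying spectrum and the geometric fixed points, exhibit two explicit classes, and invoke Lemma~\ref{lem:useful}) is genuinely different from the paper's, which instead runs two cofiber sequences --- one reducing to $S(\rho)_+\wedge_{\Sigma_2}S^{2(k-1)\rho}$ via \cite{Wilson}, and one comparing that to $S(\rho)_+\wedge_{\Sigma_2}S^{2(k\rho-\sigma)}$ --- and computes the boundary maps by degree considerations and restriction to the underlying spectrum. Your route could in principle work, but as written it contains a genuine error. The computation of $X^{\Phi C_2}$ in your second paragraph is wrong: $\Phi^{C_2}$ does \emph{not} commute with $\Sigma_2$-orbits in the naive way you use. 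A point of $S(\rho)/\Sigma_2$ is $C_2$-fixed if its preimage orbit is stabilized by either $C_2\times\{e\}$ \emph{or} the diagonal $\Delta\subset C_2\times\Sigma_2$, so the second fixed orbit $[(0,\pm1)]$ --- which you yourself identify in your third paragraph --- contributes to $X^{\Phi C_2}$ as well. (Sanity check with $k=0$, $V=0$: there $X=(S(\rho)/\Sigma_2)_+\simeq S^\sigma_+$, whose geometric fixed points are $S^0\vee S^0$, not $S^0$.) The correct answer is $X^{\Phi C_2}\simeq S^{2k-1}\vee S^{2k}$, the $S^{2k-1}$ coming from the $\Delta$-fixed orbit whose fiber you correctly diagonalize as $(2k-1)\rho$. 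Note that your answer $S^{2k}$ is actually inconsistent with the statement being proved: if $\td{\HH}_\star X$ were free on generators in degrees $2k\rho-\sigma-1$ and $2k\rho-\sigma$, then by (\ref{eq:HPhi}) the Poincar\'e series of $H_*(X^{\Phi C_2})$ would have to be $t^{2k-1}+t^{2k}$, not $t^{2k}$.

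This error then manufactures the ``main obstacle'' you wrestle with at the end: with the corrected fixed-point computation there is no class with vanishing $\Phi^{C_2}$-image, the two generators map to the two sphere summands in degrees $2k-1$ and $2k$, and Lemma~\ref{lem:useful} applies verbatim --- no strengthening is needed and no ``surplus summand'' needs cancelling at the level of that lemma. What \emph{does} remain, and what your sketch does not supply, is the actual construction of classes $b_1\in\HH_{2k\rho-\sigma-1}(X)$ and $b_2\in\HH_{2k\rho-\sigma}(X)$ and the verification that they restrict correctly under both $\Phi^e$ and $\Phi^{C_2}$. Your cell-filtration of $X$ over $S^\sigma$ has associated graded $S^{2k\rho-2\sigma}\vee S^{(2k-1)\rho}\vee\left({C_2}_+\wedge S^{2k\rho-\sigma}\right)$, and the free piece contributes an \emph{induced} $\HH_\star$-module, not a free one; so producing genuine $C_2$-fixed generators in the claimed degrees requires analyzing the attaching maps --- which is precisely the boundary-map computation the paper carries out ($\partial_1=1$, $\partial_2=0$, detected on underlying homology) and which your proposal only gestures at. As it stands the argument is not complete.
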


\begin{proof} Consider the following cofiber sequences:
	\begin{align}
	S^{2(k-1)\rho} \to S(\rho)_+ \wedge_{\Sigma_2} S^{2((k-1)\rho)} \to S^{2(k-1)\rho+\sigma}
	\label{cofiber1}
	\\
	\Sigma S(\rho)_+\wedge_{\Sigma_2}S^{2((k-1)\rho)} \to 
	S(\rho)_+\wedge_{\Sigma_2}S^{2(k\rho - \sigma)} \to
	\Sigma^{2(k\rho -\sigma)}S(\rho)_+\label{cofiber2}
	\end{align}
The sequence (\ref{cofiber1}) arises from Theorem 2.15 of \cite{Wilson}
and the second arises from the $(C_2\times \Sigma_2)$-equivariant
inclusion $\Sigma S^{2((k-1)\rho)} \to S^{2(k\rho -\sigma)}$,
where both $C_2$ and $\Sigma_2$ act trivially on the first suspension
coordinate.

In (\ref{cofiber1}), the boundary map on $\HH_{\star}$ is zero because the group
	\[
	\left[S^{2(k-1)\rho+\sigma}, \Sigma^{2(k-1)\rho+1}\HH\right] = \HH_{-1+\sigma}
	\]
is zero. Thus
	\[
	\td{\HH}_{\star}\left(S(\rho)_+\wedge_{\Sigma_2}S^{2(k-1)\rho}\right)
	\cong \HH_{\star}\{e_{2(k-1)\rho}, e_{2(k-1)\rho+\sigma}\}
	=\HH_{\star}\{e_{2k\rho - 2\sigma -2}, e_{2k\rho-\sigma-2}\}.
	\]
Now we turn to the second cofiber sequence. Notice that $S(\rho)_+$
is $C_2$-equivariantly equivalent to $S^{\sigma}\vee S^0$. From the
previous computation, the boundary is then determined
by elements in the following four groups:
	\begin{align*}
	\partial_1\in\left[S^{2(k\rho -\sigma)}, \Sigma^{2k\rho-2\sigma}\HH\right]
	&=\HH_0 \\
	\partial_2\in\left[S^{2(k\rho -\sigma)+\sigma}, \Sigma^{2k\rho-\sigma}\HH\right]
	&=\HH_{0}\\
	\partial_3\in\left[S^{2(k\rho -\sigma)}, \Sigma^{2k\rho-\sigma}\HH\right]
	=\HH_{-\sigma} &= 0\\
	\partial_4\in\left[S^{2(k\rho -\sigma)+\sigma}, \Sigma^{2k\rho-2\sigma}\HH\right] 
	=\HH_{\sigma} &= 0
	\end{align*}
Elements of $\HH_0$ are determined by their restriction to $H_0$, and
comparison with the underlying homology forces $\partial_1=1$ and $\partial_2 = 0$.
The result follows.
\end{proof}

Thus we get a pair of Dyer-Lashof operations
\begin{align*}
Q^{k\rho}: \HH_{k\rho-\sigma}X \rightarrow \HH_{2k\rho-\sigma}X, \\
Q^{k\rho-1}: \HH_{k\rho-\sigma}X \rightarrow \HH_{2k\rho-\sigma-1}X
\end{align*}
given by the formulas
\begin{align*}
Q^{k\rho}(x) & := \mc{P}(x)(e_{2k\rho-\sigma}), \\
Q^{k\rho-1}(x) & := \mc{P}(x)(e_{2k\rho-\sigma-1}).
\end{align*}

\begin{rmk}
If $X$ is actually an equivariant infinite loop space, then $\HH_\star X$ has an action by equivariant Dyer-Lashof operations \cite{Wilson}, and these operations agree with those defined in that paper.
\end{rmk}

\subsection*{Compatibility with fixed points}

The compatibility of all this structure with the maps $\Phi^e$ and $\Phi^{C_2}$ of (\ref{eq:Phie}) and (\ref{eq:PhiC2}) is summarized as follows.

\begin{description}
\item[Products] Note that $X^e$ is an $E_2$-algebra, and $X^{C_2}$ is an $E_1$-algebra.  The maps $\Phi^e$ and $\Phi^{C_2}$ are algebra homomorphisms. \vspace{10pt}

\item[Norms]  The following diagram commutes:
$$
\xymatrix@C+1em{
& H_kX^e \ar[ld]_{t} \ar[d]_{n} \ar[rd]^{\mr{Fr}} \\
H_k X^{C_2} & \HH_{k\rho} X \ar[l]^-{\Phi^{C_2}} \ar[r]_-{\Phi^e} & H_{2k} X^e
}
$$
Here $t$ is the transfer (\ref{eq:transfer}) and $\mr{Fr}$ is the squaring map (Frobenius).\vspace{10pt}

\item[Dyer-Lashof operations]  
The following diagrams commute, where $\epsilon = 0,1$:
$$
\xymatrix@C+1em{
 \HH_{k\rho-\sigma} X \ar[r]^{\Phi^e} \ar[d]_{Q^{k\rho - \epsilon}} &
H_{2k-1}X^e \ar[d]^{Q^{2k-\epsilon}} \\
 \HH_{2k\rho-\sigma-\epsilon} X \ar[r]_{\Phi^e}  &
H_{4k-1-\epsilon}X^e
}
$$
$$
\xymatrix@C+1em{
\HH_{k\rho-\sigma} X \ar[r]^{\Phi^{C_2}} \ar[d]_{Q^{k\rho}}&
H_kX^{C_2} \ar[d]^{\mr{Fr}}  
 \\
 \HH_{2k\rho-\sigma} X \ar[r]_{\Phi^{C_2}}&
H_{2k} X^{C_2}  
}
$$
\end{description}

\section{Homology of $\Omega^\rho S^{\rho+1}$}

\begin{thm}
There is an \emph{additive} isomorphism (of $\HH_\star$-modules)
$$ \HH_\star \Omega^\rho S^{\rho+1} \cong
\HH_\star \otimes E[t_0, t_1, \ldots] \otimes P[e_1, e_2, \ldots]
$$
with
\begin{align*}
\abs{t_i} & = 2^i\rho - \sigma, \\
\abs{e_i} & = (2^i-1)\rho.
\end{align*}
\end{thm}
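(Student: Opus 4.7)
The strategy is to apply Lemma~\ref{lem:useful}. I will exhibit explicit classes $t_i, e_i \in \HH_\star \Omega^\rho S^{\rho+1}$ in the prescribed bidegrees and verify that their images under $\Phi^e$ and $\Phi^{C_2}$ form bases of the respective mod-$2$ homologies of $(\Omega^\rho S^{\rho+1})^e = \Omega^2 S^3$ and $(\Omega^\rho S^{\rho+1})^{C_2}$. Let $\iota \in \HH_{\rho-\sigma}\Omega^\rho S^{\rho+1}$ be the fundamental class (coming from the evaluation map $S^\rho \to \Omega^\rho S^{\rho+1}$) and recursively define
\[
t_0 := \iota, \qquad t_{i+1} := Q^{2^i\rho}(t_i), \qquad e_{i+1} := Q^{2^i\rho - 1}(t_i)
\]
using the Dyer--Lashof operations of Section~3; the bidegrees match.

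For hypothesis~(1), the compatibility square $\Phi^e\circ Q^{k\rho - \varepsilon} = Q^{2k-\varepsilon}\circ \Phi^e$, together with $\Phi^e(\iota) = \iota^e$, gives inductively
\[
\Phi^e(t_i) = Q^{2^i}Q^{2^{i-1}}\cdots Q^2 \iota^e =: y_i, \qquad \Phi^e(e_{i+1}) = Q^{|y_i|}(y_i) = y_i^2,
\]
where $\{y_i\}$ are the classical polynomial generators of $H_*(\Omega^2 S^3;\FF_2) = \FF_2[y_0, y_1, \ldots]$ in degree $2^{i+1}-1$. Hence a monomial $t^S e^T$ maps to $\prod_i y_i^{\varepsilon_i + 2T_{i+1}}$ with $\varepsilon_i \in \{0,1\}$, $T_{i+1} \geq 0$; unique binary expansion of each exponent shows this is a basis.

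For hypothesis~(2) I first identify the fixed-point space. Applying $\Map_*^{C_2}(-, S^{\rho+1})$ to the cofiber sequence $C_{2+}\wedge S^1 \to S^1 \to S^\rho$ (obtained by smashing~(\ref{eq:cofiber}) with $S^1$) and taking $C_2$-fixed points yields a fiber sequence
\[
(\Omega^\rho S^{\rho+1})^{C_2} \to \Omega S^2 \to \Omega S^3
\]
whose right-hand map is $\Omega$ of the inclusion of fixed points $(S^{\rho+1})^{C_2} = S^2 \hookrightarrow S^{\rho+1} \cong S^3$; this is nullhomotopic since $S^3$ is $2$-connected, so
\[
(\Omega^\rho S^{\rho+1})^{C_2} \simeq \Omega S^2 \times \Omega^2 S^3, \qquad H_* = \FF_2[x]\otimes \FF_2[y_0, y_1, \ldots],\ |x|=1.
\]
Tracing the adjoint of $\iota$ on $C_2$-fixed points identifies $\Phi^{C_2}(\iota)$ with the fundamental class of $\Omega S^2$, i.e.~$x$; combined with $\Phi^{C_2}\circ Q^{k\rho} = \mr{Fr}\circ \Phi^{C_2}$ this yields $\Phi^{C_2}(t_i) = x^{2^i}$.

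The main obstacle is computing $\Phi^{C_2}(e_{i+1})$, since the compatibility diagrams of Section~3 treat $\Phi^{C_2}\circ Q^{k\rho}$ but not $\Phi^{C_2}\circ Q^{k\rho-1}$. My plan is to rewrite $e_{i+1}$ in terms of the norm, modulo decomposables in $t_j, e_j$ for $j \leq i$: specifically to show $e_{i+1} \equiv n(y_i)$ up to such decomposables. The norms diagram $\Phi^{C_2}\circ n = t$ then reduces the problem to analyzing the space-level transfer $t: \Omega^2 S^3 \to (\Omega^\rho S^{\rho+1})^{C_2}$ coming from $\td m: N^\times \Omega^2 Y \to \Omega^\rho Y$ (with $Y = S^{\rho+1}$), which I expect to project to the identity on the $\Omega^2 S^3$ factor of the splitting when evaluated on polynomial generators. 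Granting this, $\Phi^{C_2}(e_{i+1}) = y_i$ modulo decomposables, and since $E[x^{2^i}] = \FF_2[x]$ by binary expansion, the monomial images $x^{\sum_{i\in S}2^i}\prod y_{j-1}^{T_j}$ exhaust $\FF_2[x, y_0, y_1, \ldots]$, completing the verification via Lemma~\ref{lem:useful}.
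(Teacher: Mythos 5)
Your setup (Lemma~\ref{lem:useful}, the classes $t_i$, the fiber sequence identifying $(\Omega^\rho S^{\rho+1})^{C_2}\simeq \Omega S^2\times \Omega^2 S^3$, the computation $\Phi^e(t_i)=y_i$, $\Phi^e(e_{i+1})=y_i^2$, and $\Phi^{C_2}(t_i)=x^{2^i}$) is exactly the intended argument, and hypothesis (1) goes through with your classes. But there is a genuine gap at the one step you yourself flag: the computation of $\Phi^{C_2}(e_{i+1})$ for your classes $e_{i+1}:=Q^{2^i\rho-1}(t_i)$. The compatibility diagrams available relate $\Phi^{C_2}$ only to the top operation $Q^{k\rho}$ and to the norm $n$; nothing recorded (or proved by you) controls $\Phi^{C_2}\circ Q^{k\rho-1}$. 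Your proposed remedy --- that $Q^{2^i\rho-1}(t_i)\equiv n(y_i)$ modulo decomposables in the $t_j,e_j$ --- is stated as a plan (``I expect''), not proved, and it does not follow formally from the product/norm/Dyer--Lashof structure listed in Section~3. Without it you cannot rule out, for instance, that $\Phi^{C_2}(e_{i+1})$ is decomposable or zero, in which case the images of your monomials would fail to be a basis of $\FF_2[x]\otimes\FF_2[t(y_0),t(y_1),\ldots]$ and Lemma~\ref{lem:useful} would not apply. So as written the verification of hypothesis (2) is incomplete precisely at its crucial point.

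The way around this, which is what the paper does, is to not define the even-degree generators by the lower Dyer--Lashof operation at all: set $e_i:=n(x_i)$, the norm of the classical polynomial generator $x_i\in H_*\Omega^2 S^3$ (the degree $(2^i-1)\rho$ is the same as yours). Then both hypotheses become immediate from the recorded structure: $\Phi^e(e_i)=\mr{Fr}(x_i)=x_i^2$ and $\Phi^{C_2}(e_i)=t(x_i)$ by the norms diagram, and the fixed-point homology is $P[y]\otimes P[t(x_1),t(x_2),\ldots]$ because the fiber inclusion in the split fibration is exactly the transfer $t$ --- no identification of transfer images with a chosen splitting is needed. If you want to keep your definition of $e_{i+1}$ via $Q^{2^i\rho-1}$, you would need to actually prove a relation of the type you conjecture (a formula for $Q^{k\rho-1}$ in terms of the norm, or a formula for $\Phi^{C_2}Q^{k\rho-1}$), which is additional input beyond what this paper establishes.
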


\begin{proof}
Note that we have
$$ H_* \Omega^2 S^3 = \FF_2[x_1, x_2, \ldots] $$
with
$$ \abs{x_i} = 2^i-1. $$
Here $x_1$ is the fundamental class $\iota_1$, and 
$$ x_i := Q^{2^i}Q^{2^{i-1}}\cdots Q^{2}x_1. $$
Define $t_0 \in \HH_{1} \Omega^\rho S^{\rho+1}$ to be the fundamental class, and define the other ``generators'' $e_i$ and $t_i$ by
\begin{align*}
e_i & := n(x_i), \\
t_i & := Q^{2^i\rho}Q^{2^{i-1}\rho}\cdots Q^{\rho} t_0.
\end{align*}
Consider the product
$$ t^{\ul{\epsilon}}e^{\ul{k}} := t_0^{\epsilon_0}t_1^{\epsilon_1} \cdots e_1^{k_1} e_2^{k_2} \cdots \in \HH_\star(\Omega^\rho S^{\rho+1}) $$
with $\epsilon_i \in \{0,1\}$ and $k_i \ge 0$.  We compute
$$ \Phi^e(t^{\ul{\epsilon}}e^{\ul{k}}) = x_1^{2k_1+\epsilon_0}x_2^{2k_2+\epsilon_1}\cdots. $$
Mapping out of the cofiber sequence (\ref{eq:cofiber}) gives a fiber sequence
$$ \Omega N^\times \Omega S^{\rho+1} \rightarrow \Omega^\rho S^{\rho+1} \rightarrow \Omega S^{\rho+1} \xrightarrow{\Delta} N^\times \Omega S^{\rho+1}. $$
Upon taking fixed points we get a fiber sequence
$$ \Omega^2 S^{3} \xrightarrow{t} (\Omega^\rho S^{\rho+1})^{C_2} \rightarrow \Omega S^{2} \xrightarrow{\mr{null}} \Omega S^{3} $$
In particular there is an equivalence
$$ (\Omega^\rho S^{\rho+1})^{C_2} \simeq \Omega S^2 \times \Omega^2 S^3. $$
and we have
$$ H_*(\Omega^\rho S^{\rho+1})^{C_2} \cong P[y]\otimes P[t(x_1), t(x_2), \ldots] $$
where $y$ is the image of the fundamental class under the map
$$ S^1 \rightarrow (\Omega^\rho S^{\rho+1})^{C_2}. $$
It follows that
$$ \Phi^{C_2}(t^{\ul{\epsilon}}e^{\ul{k}}) = y^{\epsilon_0 + 2\epsilon_1 + 4\epsilon_2 + \cdots } t(x_1)^{k_1} t(x_2)^{k_2}\cdots.  $$
Thus the set
$$ \{ t^{\ul{\epsilon}}e^{\ul{k}} \} \subset \HH_\star X $$
satisfies the hypotheses of Lemma~\ref{lem:useful}, and the result follows.
\end{proof}

\section{The equivariant Mahowald theorem}

In order to prove Theorem~\ref{thm:Mahowald} we will need
to establish a Thom isomorphism

$$ \HH_\star (\Omega^\rho S^{\rho+1})^{\td{\mu}} \cong \HH_\star \Omega^\rho S^{\rho+1}. $$

We will do so in two steps. Recall that an $E_0$-algebra
is just a spectrum $X$ equipped with a map $S^0 \to X$.
Let $\mathrm{Free}^*_{E_{\rho}}: \mathrm{Alg}_{E_0}(\mathrm{Sp}^{C_2})
\to \mathrm{Alg}_{E_\rho}(\mathrm{Sp}^{C_2})$ denote a homotopical left adjoint
to the forgetful functor. An explicit model for this functor is the homotopy pushout
of $E_{\rho}$-algebras:
	\[
	\xymatrix{
	\mathrm{Free}_{E_\rho}(S^0)\ar[d]\ar[r] & \mathrm{Free}_{E_\rho}(X)\ar[d]\\
	S^0 \ar[r] & \mathrm{Free}^*_{E_{\rho}}(X)
	}
	\]

We will need the following theorem.

\begin{thm}\label{thm:freeThom} Let $f: X \to B_{C_2}O$ classify a virtual bundle of dimension zero
and denote by $\tilde{f}: \Omega^{\rho}\Sigma^{\rho} X \to B_{C_2}O$ the
associated $\Omega^{\rho}$-map. Then there is a canonical
equivalence of $E_{\rho}$-algebras
in $\mathrm{Sp}^{C_2}$
	\[
	\mathrm{Free}^*_{E_{\rho}}(X^f) \cong \left(\Omega^{\rho}\Sigma^{\rho}X\right)^{\tilde{f}}.
	\]
\end{thm}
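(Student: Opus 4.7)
The plan is to exploit the fact that the Thom spectrum construction is a symmetric monoidal left adjoint: as such, it commutes with the free-$E_\rho$-algebra functor, and the theorem reduces to a universal-property identification of $(\Omega^\rho \Sigma^\rho X, \tilde{f})$ as the free $E_\rho$-algebra generated by $(X, f)$ in pointed $C_2$-spaces over $B_{C_2}O$.

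Concretely, I would proceed as follows. Using the (equivariant version of the) Ando--Blumberg--Gepner--Hopkins--Rezk description of Thom spectra, a map $f : X \to B_{C_2}O$ classifies a functor $X \to \mathrm{Pic}(\mathbb S_{C_2}) \hookrightarrow \Sp^{C_2}$, and $X^f$ is its $\infty$-categorical colimit. This presents Thom spectrification as a colimit-preserving symmetric monoidal functor
$$
\mathrm{Th} : (\Top^{C_2}_*)_{/B_{C_2}O} \longrightarrow \Sp^{C_2}, \qquad (X,f) \longmapsto X^f,
$$
where the source carries the symmetric monoidal structure coming from the $E_\infty$-space structure on $B_{C_2}O$. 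Because symmetric monoidal left adjoints commute with free algebra functors over any operad, $\mathrm{Th}$ induces a commuting square of $E_0$- and $E_\rho$-algebra $\infty$-categories; applied to the $E_0$-algebra $(X,f)$, this yields a natural equivalence
$$
\mathrm{Th}\bigl(\mathrm{Free}^*_{E_\rho}(X, f)\bigr) \simeq \mathrm{Free}^*_{E_\rho}(X^f).
$$

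It then remains to identify the left-hand Thom spectrum with $(\Omega^\rho \Sigma^\rho X)^{\tilde{f}}$. The $\rho$-fold loop map $\tilde{f}$ is an $E_\rho$-map extending $f$ by construction, so there is a canonical comparison $\mathrm{Free}^*_{E_\rho}(X, f) \to (\Omega^\rho \Sigma^\rho X, \tilde{f})$ in $E_\rho$-algebras over $B_{C_2}O$. The universal property of this map should follow from the $(\Sigma^\rho, \Omega^\rho)$-adjunction combined with the fact that $B_{C_2}O$ is a grouplike $E_\rho$-space, so that any $E_\rho$-algebra equipped with a map to $B_{C_2}O$ is effectively controlled by its group-complete quotient.

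The main obstacle is making this last identification rigorous. At the level of pointed spaces, the literal free $E_\rho$-algebra on $X$ is a configuration-space-type construction $C_\rho X$ rather than $\Omega^\rho \Sigma^\rho X$, and the two are equivalent only after group completion, which equivariantly requires an approximation theorem tailored to $\rho = 1+\sigma$ (with its single trivial summand). One therefore has to argue either via an equivariant May--Milgram-style theorem, or directly by showing that Thom spectrification over $B_{C_2}O$ inverts the group-completion map, the point being that $B_{C_2}O$ is itself grouplike. This equivariant subtlety, rather than the categorical skeleton of the proof, is where the real work lies.
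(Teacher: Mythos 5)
Your proposal is correct in outline and follows the same two-step skeleton as the paper: (i) Thom spectrification commutes with the free $E_\rho$-algebra construction, and (ii) the free $E_\rho$-algebra on $X$ in $C_2$-spaces over $B_{C_2}O$ is identified with $(\Omega^\rho\Sigma^\rho X,\tilde f)$. Where you differ is in the technology for step (i): you invoke an equivariant version of the Ando--Blumberg--Gepner--Hopkins--Rezk picture, presenting the Thom spectrum as a colimit over $X$ of a functor to $\mathrm{Pic}$ and using that a symmetric monoidal, colimit-preserving functor commutes with free algebra functors, whereas the paper simply cites the point-set equivariant theory of Lewis--May--Steinberger (Theorem IX.7.1 and Remark X.6.4), where ``Thom spectrum of the free construction is the free algebra on the Thom spectrum'' is proved directly via twisted half-smash products and extended powers. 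Your route is cleaner conceptually but requires setting up the genuine-$C_2$ ABGHR formalism (a symmetric monoidal Thom functor on $(\Top^{C_2}_*)_{/B_{C_2}O}$ valued in $\Sp^{C_2}$), which is a nontrivial input not needed in the paper's citation-based argument; what each approach buys is, respectively, a universal-property proof in the style of Antol\'in-Camarena--Barthel versus an off-the-shelf classical reference.

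The ``main obstacle'' you flag in step (ii) is not actually open: the identification of the free $E_\rho$-space $C_\rho X$ with $\Omega^\rho\Sigma^\rho X$ is exactly the equivariant approximation theorem (Rourke--Sanderson, Guillou--May; originally Hauschild), and this is the second ingredient the paper cites. So to complete your argument you should simply invoke that theorem rather than leave it as a fork between two possible strategies. Two caveats to state carefully: the approximation map is an equivalence only when $X$ is $C_2$-connected (both $X^e$ and $X^{C_2}$ connected) and otherwise only a group completion, so for the general statement one either adds a connectivity hypothesis or argues, as you suggest, that Thom spectrification over the grouplike space $B_{C_2}O$ inverts group completion; in the application of the paper $X=S^1$ with trivial action is $C_2$-connected, so the approximation theorem applies on the nose. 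With that reference supplied, your proof goes through and is a legitimate modern alternative to the paper's.
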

\begin{proof} Combine the equivariant approximation theorem \cite{GuillouMay, RourkeSanderson}
with Theorem IX.7.1 and Remark X.6.4 of \cite{LMS}.
\end{proof}

\begin{rmk} The non-equivariant version of Theorem~\ref{thm:freeThom} was
first observed by Mark Mahowald, and then proven by Lewis. A nice modern
account in the non-equivariant setting via universal properties can be found in
\cite{OmarToby}.
\end{rmk}

\begin{prop}
There is a Thom isomorphism
$$ \HH_\star (\Omega^\rho S^{\rho+1})^{\td{\mu}} \cong \HH_\star \Omega^\rho S^{\rho+1}. $$
\end{prop}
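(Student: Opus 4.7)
The plan is to reduce the Thom isomorphism over $\Omega^\rho S^{\rho+1}$ to the analogous (and much easier) statement for the base bundle $\mu$ over $S^1$, by combining Theorem~\ref{thm:freeThom} with the fact that the functor $\HH \wedge -$ commutes with free $E_\rho$-algebra constructions. First, I would apply Theorem~\ref{thm:freeThom} to both $\mu$ and the zero map $0: S^1 \to B_{C_2}O$ to obtain equivalences of $E_\rho$-algebras
\[
(\Omega^\rho S^{\rho+1})^{\td{\mu}} \simeq \mathrm{Free}^*_{E_\rho}\bigl((S^1)^\mu\bigr), \qquad \Sigma^\infty(\Omega^\rho S^{\rho+1})_+ \simeq \mathrm{Free}^*_{E_\rho}\bigl(\Sigma^\infty S^1_+\bigr).
\]

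Next, I would observe that $\mu$ is $\HH$-orientable as an equivariant virtual bundle. Since the underlying spectrum of $(S^1)^\mu$ is the mod~$2$ Moore spectrum (with trivial $C_2$-action), and $2$ is nullhomotopic on $\HH$, the cofiber sequence $\HH \xrightarrow{2} \HH \to \HH \wedge (S^1)^\mu$ splits, giving $\HH \wedge (S^1)^\mu \simeq \HH \vee \Sigma \HH$ as $\HH$-modules. This matches the analogous decomposition $\HH \wedge \Sigma^\infty S^1_+ \simeq \HH \vee \Sigma \HH$ induced by the basepoint retraction, compatibly with the units coming from the respective basepoints, yielding an equivalence
\[
\HH \wedge (S^1)^\mu \simeq \HH \wedge \Sigma^\infty S^1_+
\]
of $E_0$-algebras in $\HH$-modules.

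Finally, since $\HH \wedge -$ is a symmetric monoidal left adjoint from $\Sp^{C_2}$ to the category of $\HH$-modules, it preserves the pushout of $E_\rho$-algebras defining $\mathrm{Free}^*_{E_\rho}$; concretely, there is a natural equivalence $\HH \wedge \mathrm{Free}^*_{E_\rho}(Y) \simeq \mathrm{Free}^{*,\HH}_{E_\rho}(\HH \wedge Y)$ of $E_\rho$-algebras in $\HH$-modules. Applying this to $Y = (S^1)^\mu$ and $Y = \Sigma^\infty S^1_+$ and combining with the $E_0$-equivalence above gives
\[
\HH \wedge (\Omega^\rho S^{\rho+1})^{\td{\mu}} \simeq \HH \wedge \Sigma^\infty(\Omega^\rho S^{\rho+1})_+,
\]
whence the Thom isomorphism follows upon applying $\pi^{C_2}_\star$. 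The main technical point is verifying the compatibility of the splitting $\HH \wedge (S^1)^\mu \simeq \HH \vee \Sigma \HH$ with the $E_0$-structure, which amounts to choosing the Thom class of $\mu$ to match the unit of $\HH$; everything else is formal manipulation with the pushout presentation of $\mathrm{Free}^*_{E_\rho}$.
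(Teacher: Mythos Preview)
Your proposal is correct and follows essentially the same approach as the paper: both reduce via Theorem~\ref{thm:freeThom} to the free $E_\rho$-algebra on $(S^1)^\mu$, use that $\HH \wedge (-)$ (being symmetric monoidal) commutes with $\mathrm{Free}^*_{E_\rho}$, and then invoke the base-case Thom isomorphism $\HH \wedge (S^1)^\mu \simeq \HH \wedge S^1_+$. The only difference is that you spell out this base case via the splitting of the cofiber of $2$ on $\HH$ and flag the $E_0$-compatibility, whereas the paper simply asserts it as fact~(2).
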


\begin{proof}
Let $\mathrm{Free}^*_{E_{\rho}, \HH}: \mathrm{Alg}_{E_0}(\mathrm{Mod}_{\HH})
\to \mathrm{Alg}_{E_{\rho}}(\mathrm{Mod}_{\HH})$ denote a homotopical left
adjoint to the forgetful functor. Along with the previous theorem, we will need
two facts:
	\begin{enumerate}
	\item $\HH \wedge (-): \mathrm{Sp}^{C_2} \to \mathrm{Mod}_{\HH}$ is
	symmetric monoidal.
	\item There is a Thom isomorphism $\HH \wedge (S^1)^{\mu} \cong
	\HH \wedge S^1_+$.
	\end{enumerate}
The proposition is now proved by the following string of equivalences: 
	\begin{align*}
	\HH \wedge \left(\Omega^{\rho}\Sigma^{\rho}S^1\right)^{\tilde{\mu}}
	&\cong \HH \wedge \mathrm{Free}^*_{E_{\rho}}\left((S^1)^{\mu}\right)
	&\text{by Theorem~\ref{thm:freeThom}}\\
	&\cong \mathrm{Free}^*_{E_{\rho}, \HH}\left(\HH \wedge (S^1)^{\mu}\right)
	&\text{by (1)}\\
	&\cong \mathrm{Free}^*_{E_{\rho}, \HH}\left(\HH \wedge S^1_+\right)
	&\text{by (2)}\\
	&\cong \HH \wedge \mathrm{Free}^*_{E_\rho}\left( S^1_+\right)
	&\text{by (1)}\\
	&\cong \HH \wedge \Omega^{\rho}\Sigma^{\rho}S^1_+. &
	\end{align*}
\end{proof}

\begin{proof}[Proof of Theorem~\ref{thm:Mahowald}]
The Thom class is represented by a map
$$ (\Omega^\rho S^{\rho+1})^{\td{\mu}} \rightarrow \HH. $$
We wish to show this map is an isomorphism on $\HH_\star$.  The homology of $\HH$ is the $C_2$-equivariant Steenrod algebra, computed in \cite{HuKriz} to be
$$ \HH_\star \HH = \HH_\star[\tau_0, \tau_1, \cdots, \xi_1, \xi_2, \cdots ]/(\tau_i^2 = (u+a\tau_0)\xi_{i+1} + a\tau_{i+1}) $$
with 
\begin{align*}
\abs{\tau_i} & = 2^i\rho - \sigma, \\
\abs{\xi_i} & = (2^i-1)\rho.
\end{align*}
It suffices to show it is surjective, since the two homologies are abstractly isomorphic and of finite type.  Observe that the composite
$$ M(2) \simeq (S^1)^{\mu} \rightarrow (\Omega^\rho S^{\rho+1})^{\td{\mu}} \rightarrow \HH $$
hits $\tau_0$.  Everything is hit then, by \cite[Thm.~5.4]{Wilson}.
\end{proof}


\bibliographystyle{amsalpha}
\nocite{*}
\bibliography{C2mah2}

\end{document}